\newcommand{\x}{\times}
\newcommand{\T}{\top}
\newcommand{\bM}[1]{\begin{bmatrix}#1\end{bmatrix}}
\newcommand{\bsM}[1]{\begin{bsmallmatrix}#1\end{bsmallmatrix}}
\DeclarePairedDelimiter{\inr}{\langle}{\rangle}
\DeclareMathOperator{\mcg}{Mod}
\DeclareMathOperator{\tr}{tr}
\def\phi{\varphi}
\DeclarePairedDelimiter{\abs}{\lvert}{\rvert}
\def\SL{\mathrm{SL}}
\def\PSL{\mathrm{PSL}}
\def\bbR{\mathbb{R}}
\def\bbZ{\mathbb{Z}}
\newtheorem{thm}{Theorem}[section]
\newtheorem{prop}[thm]{Proposition}
\newtheorem{lem}[thm]{Lemma}
\newtheorem*{lem*}{Lemma}
\newtheorem{cor}[thm]{Corollary}
\theoremstyle{definition}
\newtheorem{rem}[thm]{Remark}
\numberwithin{equation}{subsection}
\title{Thurston construction mapping classes with minimal dilatation}
\author{Maryam Contractor and Otto Reed}
\date{}
\begin{document}
\begin{abstract}
    Given a pair of filling curves $\alpha, \beta$ on a surface of genus $g$ with $n$ punctures, we explicitly compute the mapping classes realizing the minimal dilatation over all the pseudo-Anosov maps given by the Thurston construction on $\alpha,\beta$. We do so by solving for the minimal spectral radius in a congruence subgroup of $\PSL_2(\bbZ)$. We apply this result to realized lower bounds on intersection number between $\alpha$ and $\beta$ to give the minimal dilatation over any Thurston construction pA map on $\Sigma_{g,n}$ given by a filling pair $\alpha \cup \beta$. 
\end{abstract}
\maketitle 

\section{Statement of Results}

Let $\Sigma_{g,n}$ denote the surface of genus $g$ with $n$ punctures and let $\mcg(\Sigma_{g,n})$ denote the associated mapping class group. If $[f] \in \mcg(\Sigma_{g,n})$ is an isotopy class of pseudo-Anosov (pA) homeomorphisms of $\Sigma_{g,n}$, then there is an associated ``stretch factor'' $\lambda > 1$ which quantifies the scaling of its stable and unstable foliations (\cite{primer}, Section 13.2.3). This ``stretch factor'' or \emph{dilatation} $\lambda$ gives multiple perspectives of $f$. 

Among other things, $\lambda$ is the growth rate of the unstable foliation of $f$ under iteration and $\log(\lambda)$ is the topological entropy of $f$ (\cite{primer}, Theorem 13.2). In addition, there is a bijective correspondence between the set of dilatations in $\mcg(\Sigma_{g,n})$ and the length spectrum of closed geodesics in the moduli space of $\Sigma_{g,n}$. Finally, $\log(\lambda)$ gives the Teichm\"uller translation length, or the realized infimum distance that a point in Teichm\"uller space (under the Teichm\"uller metric) after action by $\mcg(\Sigma_{g,n})$. Thus finding minimal dilatation maps extends to minimizing entropy in subsets of the mapping class group, the length of closed geodesics in moduli space, and the Teichm\"uller translation length. 

There is literature on the problem of minimizing dilatation over pA maps in $\mcg(\Sigma_{g,n})$ (see \cite{farbleiningermargalit}), \cite{penner}). We consider a specific class of pA maps related to \emph{filling pairs} of curves in $\Sigma_{g,n}$. If $\alpha$ and $\beta$ are representatives of isotopy classes of simple closed curves $a$ and $b$ on $\Sigma_{g,n}$ and are in minimal position (i.e., the geometric intersection number of $a$ and $b$ equals $\abs{\alpha\cap\beta}$) we say $\alpha,\beta$ \emph{fill} $\Sigma_{g,n}$ if the complement $\Sigma_{g,n} \setminus (\alpha \cup \beta)$ is a union of topological disks or punctured disks. To any such filling pair $\alpha \cup \beta$, let $\Gamma_{\alpha,\beta}$ be the subgroup generated by Dehn twists about $\alpha$ and $\beta$. Thurston showed that any infinite order element of $\Gamma_{\alpha,\beta}$ not conjugate to a power of $T_\alpha$ or $T_\beta$ is pA (Theorem \ref{thurston construction}). Additionally, we call pseudo-Anosov elements of $\Gamma_{\alpha,\beta} \subset \mcg(\Sigma_{g,n})$ \emph{Thurston pA maps}. 
\begin{figure}[H]
    \centering
    \includegraphics[width=0.7\linewidth]{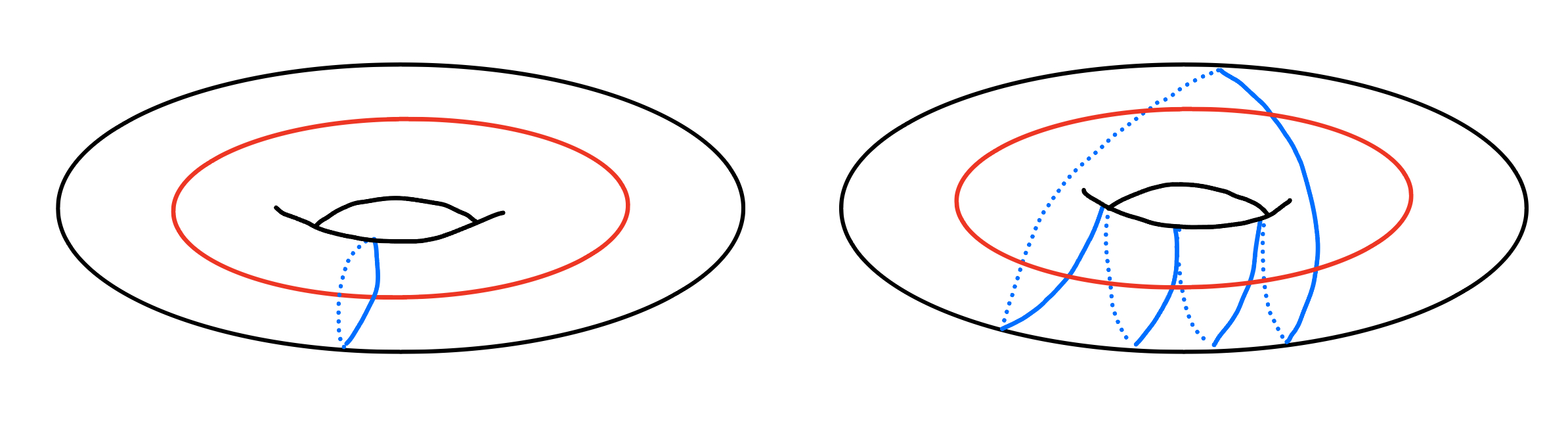}
    \caption{Two filling pairs on the torus: the second pair has greater geometric intersection number and consequently corresponds to a higher dilatation pA. Moreover, the composition of Dehn twists about the curves on the first torus is the minimal dilatation pA mapping class.}
\end{figure}
In this paper, we minimize dilatation all Thurston pA elements in $\Gamma_{\alpha,\beta}$ for any genus $g$ and number of punctures $n$ and find the following: 
\begin{thm}\label{main theorem}
    For $g\neq 0,2$, $n>2$ let $\alpha,\beta$ be any filling pair on $\Sigma_{g,n}$ and let $i(\alpha,\beta)$ be the geometric intersection number of $\alpha$ and $\beta$. Then the minimal dilatation over Thurston pA maps in $\Gamma_{\alpha,\beta}$ is achieved by the product $T_\alpha \cdot T_\beta$. This dilatation equals \[\frac{1}{2}((i(\alpha,\beta)^2+i(\alpha,\beta)\sqrt{(i(\alpha,\beta))^2-4}-2).\]
\end{thm}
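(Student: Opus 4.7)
The plan is to reduce the dilatation minimization to a trace congruence in $\PSL_2(\Z)$. Thurston's construction provides a representation
\[
\rho: \Gamma_{\alpha,\beta} \to \PSL_2(\Z), \qquad T_\alpha \mapsto A := \pM{1 & k \\ 0 & 1}, \qquad T_\beta \mapsto B := \pM{1 & 0 \\ -k & 1},
\]
where $k := i(\alpha,\beta)$, under which a class $[f] \in \Gamma_{\alpha,\beta}$ is pseudo-Anosov iff $\rho(f)$ is hyperbolic, and in that case the dilatation equals the spectral radius of $\rho(f)$. Since for a hyperbolic $M \in \SL_2(\R)$ this spectral radius is $\lambda(M) = (|\tr M| + \sqrt{(\tr M)^2 - 4})/2$, which is strictly monotone in $|\tr M|$, the problem reduces to minimizing $|\tr M|$ over the hyperbolic $M \in \langle A, B \rangle$. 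A direct matrix computation gives $\tr(AB) = 2 - k^2$, whence $\lambda(T_\alpha T_\beta) = \tfrac{1}{2}(k^2 + k\sqrt{k^2-4} - 2)$, matching the value in the statement.

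To see that this value is minimal, I would exploit a congruence constraint. Since $A \equiv B \equiv I \pmod k$, the image lies in the principal congruence subgroup $\Gamma(k) \subset \PSL_2(\Z)$. An elementary lemma then gives $\tr M \equiv 2 \pmod{k^2}$ for every $M \in \Gamma(k)$: writing $M = I + k N$ with $N \in \mathrm{Mat}_2(\Z)$, the identity $\det M = 1$ expands to $\tr N + k \det N = 0$, so $\tr N \equiv 0 \pmod k$, and thus $\tr M = 2 + k \tr N \equiv 2 \pmod{k^2}$. Therefore the traces of elements of $\langle A, B\rangle$ lie in $\{2 + jk^2 : j \in \Z\}$, and the smallest $|\tr|$ strictly larger than $2$ is precisely $k^2 - 2$, realized by $AB$. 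This completes the minimization.

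The main obstacle is a hypothesis check: one must verify that $k = i(\alpha,\beta) \geq 3$ under the assumptions $g \neq 0, 2$ and $n > 2$, so that $AB$ is genuinely hyperbolic and $\sqrt{k^2 - 4}$ is real. This is an Euler-characteristic count on the cell decomposition of $\Sigma_{g,n}$ cut out by $\alpha \cup \beta$: with $k$ vertices, $2k$ edges, and $k + 2 - 2g - n$ faces, the filling condition (every face is a disk or punctured disk, each punctured disk containing a puncture) forces $k \geq 2g + n - 1$, which exceeds $2$ in the stated range. With this in hand, the Thurston representation and the congruence on $\Gamma(k)$ combine to give the theorem.
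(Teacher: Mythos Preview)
Your argument is correct and follows the same overall strategy as the paper: use Thurston's representation to reduce to minimizing $|\tr M|$ over hyperbolic elements of $\langle A,B\rangle$, establish the congruence $\tr M \equiv 2 \pmod{k^2}$, and observe that $AB$ realizes the smallest admissible value $k^2-2$.

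Where you differ is in how you obtain the trace congruence. The paper proves a stronger structural fact (its Proposition on the form of $\Lambda_n$): every matrix in $\langle A,B\rangle$ has diagonal entries $\equiv 1 \pmod{k^2}$ and off-diagonal entries $\equiv 0 \pmod{k}$, shown by checking that such matrices form a subgroup via a computation in $\SL_2(\Z/k^2\Z)$. You instead pass to the larger principal congruence subgroup $\Gamma(k)\supset \langle A,B\rangle$ and extract only the trace information directly from the determinant identity $\det(I+kN)=1+k\,\tr N+k^2\det N$, which forces $\tr N\equiv 0\pmod{k}$ and hence $\tr M\equiv 2\pmod{k^2}$. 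This is shorter and more elementary, and it makes transparent that the congruence holds for all of $\Gamma(k)$, not just the two-generator subgroup; the paper's version, on the other hand, gives more information about the matrix entries (which it does not actually need for the minimization).

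One small slip: your Euler-characteristic count should give $k\ge 2g+n-2$, not $2g+n-1$. With $V=k$, $E=2k$, and $F=k+2-2g$ faces on the closed surface, exactly $n$ of the faces are once-punctured disks, so the number of unpunctured disks is $k+2-2g-n\ge 0$. This still yields $k\ge 3$ under the hypotheses $g\in\{1\}\cup\{g\ge 3\}$ and $n\ge 3$, so your conclusion stands.
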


We find that the minimum dilatation increases monotonically with the geometric intersection number $i(\alpha,\beta)$ for a filling pair $\alpha,\beta$. Using realized minimums for intersection number given by Aougab, Huang, and Taylor (\cite{aougab-huang}, Lemma 2.1-2.2, \cite{aougab-taylor}, Lemma 3.1, and summarized in section \ref{filling section}), we prove the following corollary giving a lower bound for the minimal dilatation Thurston pA map for all possible filling pairs. 

\begin{cor}\label{main corollary}
The minimal dilatation over all Thurston pA mapping classes in $\Gamma_{\alpha,\beta}$ for all filling pairs $\alpha,\beta$ in  $\Sigma_{g,n}$, $g\neq 0,2$ is given for $n = 0$ by 
\[\frac{1}{2}((2g-1)^2+(2g-1)\sqrt{(2g-1)^2-4}-2)\]
and for $n \geq 1$ by 
\[\frac{1}{2}((2g-1+n)^2+(2g-1+n)\sqrt{(2g-1+n)^2-4}-2).\] \\
Additionally, we have the following characterization: \\
\begin{center}
\def\arraystretch{1.5}
\begin{tabular}{|c|c|c|c|c|}  
    \hline
    Genus & Punctures  & $i(\alpha,\beta)$ & Minimal Dilatation Thurston pA\\
    \hline\hline
    $g = 0$ & $n \geq 4$ even & $n-2$ & $\frac{1}{2}((n-2)^2+(n-2)\sqrt{(n-2)^2-4}-2)$\\ \hline 
    $g = 0$ & $n$ odd & $n-1$ & $\frac{1}{2}((n-1)^2+(n-1)\sqrt{(n-1)^2-4}-2)$\\   
    \hline 
    $g = 2$ & $n \leq 2$ & $4$ & $7+4\sqrt{3}$\\ \hline 
    $g = 2$ & $n > 2$  & $2g+n-2$ & $\frac{1}{2}((2g+n-2)^2+(2g+n-2)\sqrt{(2g+n-2)^2-4}-2)$\\ \hline 
\end{tabular}
\end{center}
\end{cor}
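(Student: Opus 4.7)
The plan is to combine Theorem \ref{main theorem}, which expresses the minimal dilatation over Thurston pA maps in $\Gamma_{\alpha,\beta}$ as an explicit function of $i(\alpha,\beta)$, with sharp realized lower bounds on $i(\alpha,\beta)$ for filling pairs on $\Sigma_{g,n}$. Minimizing jointly reduces the corollary to two essentially independent steps: a short monotonicity fact and a citation of existing filling-pair results.

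Step one is a monotonicity lemma: the function
\[
\lambda(k) \ceq \tfrac{1}{2}\bigl(k^2 + k\sqrt{k^2-4} - 2\bigr)
\]
is strictly increasing on $[2,\infty)$. Writing $h(k)=k\sqrt{k^2-4}$, direct differentiation gives $h'(k)=(2k^2-4)/\sqrt{k^2-4}>0$ for $k>2$, so both $k^2$ and $k\sqrt{k^2-4}$ are strictly increasing there, and hence so is $\lambda$. Consequently the joint minimum of the dilatation over all Thurston pA maps and all filling pairs on $\Sigma_{g,n}$ equals $\lambda(i_{\min})$, where $i_{\min}$ is the minimum of $i(\alpha,\beta)$ taken over all filling pairs on $\Sigma_{g,n}$.

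Step two is to read off the realized lower bounds for $i(\alpha,\beta)$ from \cite{aougab-huang} (Lemmas 2.1--2.2) and \cite{aougab-taylor} (Lemma 3.1), as consolidated in Section \ref{filling section}. These supply $i_{\min}=2g-1$ for $n=0$, $g\neq 0,2$; $i_{\min}=2g-1+n$ for $n\geq 1$, $g\neq 0,2$; $i_{\min}=n-2$ for $g=0$ and $n\geq 4$ even; $i_{\min}=n-1$ for $g=0$ and $n$ odd; $i_{\min}=4$ for $g=2$ and $n\leq 2$; and $i_{\min}=2g+n-2$ for $g=2$ and $n>2$. The salient point is that every one of these bounds is realized by an explicit filling pair, so in each case the minimum is attained and not merely approached.

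Step three is substitution: plugging each $i_{\min}$ into $\lambda$ reproduces the two displayed formulas and every row of the table. The only computation worth carrying out by hand is the $g=2$, $n\leq 2$ entry, where $\lambda(4)=\tfrac{1}{2}(16+4\sqrt{12}-2)=7+4\sqrt{3}$. The principal obstacle will not be any one substitution but the case-by-case bookkeeping: for the exceptional genera $g=0,2$, which fall outside the literal hypotheses of Theorem \ref{main theorem}, one must verify that the minimizing filling pair still satisfies the conditions under which the $T_\alpha T_\beta$ analysis of Theorem \ref{main theorem} (or its natural analogue) yields the correct minimal dilatation. Once that check is in place, monotonicity plus the Aougab--Huang--Taylor bounds deliver the corollary.
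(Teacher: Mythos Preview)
Your proposal is correct and follows essentially the same route as the paper: monotonicity of the dilatation in $i(\alpha,\beta)$, plus the realized minimal intersection numbers from Section~\ref{filling section}, followed by substitution. The paper dissolves your worry about the exceptional genera $g=0,2$ by invoking Theorem~\ref{min dilatation} (which depends only on the intersection number, not on $g$ or $n$) rather than Theorem~\ref{main theorem} directly, so no separate verification is needed there.
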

\subsection{Proof idea.} To prove Theorem \ref{main theorem}, we use a theorem due to Thurston, which gives a representation into $\PSL_2(\bbR)$ for the subset of $\mcg(\Sigma_{g,n})$ generated by twists about filling pairs of curves (\cite{primer}, Section 14.1).\footnote{The construction also generalizes for multicurves, or disjoint collections of simple closed curves. Here we only present the Theorem \ref{thurston construction} for two filling curves--see \cite{primer}, Section 14.1 for the generalized version.} 

\begin{thm}[Thurston's Construction]\label{thurston construction}
    Suppose $\alpha,\beta$ are simple closed curves in $\Sigma_{g,n}$, $g,n\ge 0$ so that $\alpha \cup \beta$ fill $\Sigma_{g,n}$. Let $i(\alpha,\beta)$ denote geometric intersection number of $\alpha$ and $\beta$ and let $\Gamma_{\alpha,\beta}$ be the subgroup generated by Dehn twists $T_\alpha$ and $T_\beta$ about $\alpha$ and $\beta$, respectively. Then there is a representation $\rho: \Gamma_{\alpha,\beta} \rightarrow \PSL_2(\bbZ)$ given by \[T_\alpha \mapsto \bM{1 & -i(\alpha,\beta) \\ 0 & 1} \quad T_\beta \mapsto \bM{1 & 0 \\ i(\alpha,\beta) & 1}.\] Moreover, $\rho$ has the following properties: 
    \begin{enumerate}[(i)]
        \item There is a bijective correspondence between the sets of periodic, reducible, and pA elements in $\Gamma_{\alpha,\beta}$ and the sets of elliptic, parabolic, and hyperbolic elements in $\PSL_2(\bbR)$, respectively.
        \item Parabolic elements in $\rho(f)$ are exactly powers of $T_\alpha$ or $T_\beta$.
        \item If $\rho(f)$ is hyperbolic then the dilatation of $[f] \in \mcg(\Sigma_g)$ is exactly the spectral radius of $\rho(f)$.
    \end{enumerate}
\end{thm}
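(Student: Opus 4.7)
The plan is to construct a singular Euclidean (flat) structure on $\Sigma_{g,n}$ adapted to the filling pair, in which $\alpha$ becomes a horizontal closed geodesic and $\beta$ becomes a vertical closed geodesic, and then to realize the Dehn twists $T_\alpha$ and $T_\beta$ as affine self-maps whose linear derivatives are the two matrices in the statement. The components of $\Sigma_{g,n} \setminus (\alpha \cup \beta)$ are (possibly punctured) disks whose boundaries alternate $\alpha$-arcs and $\beta$-arcs, forming $2k$-gons; I would give each such component a Euclidean rectangle structure with $\alpha$-arcs horizontal and $\beta$-arcs vertical, chosen with a uniform scale so that after gluing, $\alpha$ and $\beta$ are the core geodesics of flat cylinders of the same modulus. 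Setting $\rho(f)$ equal to the linear derivative of the affine representative of $f \in \Gamma_{\alpha,\beta}$ then yields a well-defined homomorphism $\rho: \Gamma_{\alpha,\beta} \to \PSL_2(\bbZ)$, since derivatives compose under composition of affine maps; a direct computation on the cylinder neighborhoods produces the prescribed formulas for $\rho(T_\alpha)$ and $\rho(T_\beta)$.

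For items (i) and (iii), I would apply the Nielsen--Thurston classification together with the elliptic/parabolic/hyperbolic trichotomy in $\PSL_2(\bbR)$ according to $|\tr \rho(f)|$. If $\rho(f)$ is elliptic then it has finite order, so some iterate of $f$ acts as a finite-order isometry of the flat structure, forcing $f$ to be periodic. If $\rho(f)$ is hyperbolic with eigenvalues $\lambda^{\pm 1}$, the affine representative stretches one eigendirection by $\lambda$ and contracts the transverse direction by $\lambda^{-1}$; the associated measured foliations by parallel lines in these eigendirections are invariant and transverse, so $f$ is pseudo-Anosov with dilatation equal to the spectral radius $\lambda$ of $\rho(f)$, establishing (iii). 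If $\rho(f)$ is parabolic, its fixed direction on $\bbR\bbP^1$ is rational because $\rho(f) \in \PSL_2(\bbZ)$, so the direction descends to a closed foliation on $\Sigma_{g,n}$ preserved by $f$, exhibiting a reduction system and making $f$ reducible.

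The main obstacle is item (ii): showing that the parabolic elements of $H \ceq \rho(\Gamma_{\alpha,\beta})$ are exactly the conjugates of powers of $\rho(T_\alpha)$ and $\rho(T_\beta)$. This is a purely group-theoretic statement about the subgroup of $\PSL_2(\bbZ)$ generated by $\bM{1 & -k \\ 0 & 1}$ and $\bM{1 & 0 \\ k & 1}$ with $k = i(\alpha,\beta) \geq 2$. For such $k$, a ping-pong argument on $\bbR\bbP^1$ shows that $H$ is free of rank two and acts on $\mathbb{H}^2$ with quotient having exactly two cusps, stabilized by the cyclic subgroups $\langle \rho(T_\alpha) \rangle$ and $\langle \rho(T_\beta) \rangle$ respectively. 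Since every parabolic element of $H$ fixes a unique point of $\bbR\bbP^1$ lying in one of these two cusp orbits, it must lie in a conjugate of one of these two cyclic subgroups, as claimed. Combined with (i), this also implies that every $f$ with $\rho(f)$ hyperbolic is neither periodic nor reducible, and hence is pseudo-Anosov, completing the correspondence.
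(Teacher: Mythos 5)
The paper itself does not prove this theorem: it is quoted from \cite{primer}, Section 14.1, so your sketch should be measured against the standard proof, which it essentially reproduces (flat structure adapted to $\alpha\cup\beta$, twists realized as affine maps, $\rho$ = derivative of the affine representative). One construction slip: the flat structure is not obtained by giving each complementary $2k$-gon ``a Euclidean rectangle structure'' (a $2k$-gon with $k\neq 2$ admits no such structure); the standard construction is dual to yours --- one unit square for each point of $\alpha\cap\beta$, glued along the combinatorics of the $4$-valent graph $\alpha\cup\beta$, so that the complementary disks become the cone points (or punctures). With that fix, $\alpha$ and $\beta$ are the core curves of the single horizontal and single vertical cylinder, each of modulus $1/i(\alpha,\beta)$, and your computation of $\rho(T_\alpha),\rho(T_\beta)$ and your arguments for (i) and (iii) (eigendirection foliations in the hyperbolic case, complete periodicity of rational directions in the parabolic case, finiteness of the translation automorphism group in the elliptic case) follow the standard lines.

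The genuine gap is in your treatment of (ii). The key claim --- that for $k=i(\alpha,\beta)\ge 2$ the group $\langle\bsM{1 & -k \\ 0 & 1},\bsM{1 & 0 \\ k & 1}\rangle$ is free and acts with exactly two cusps, so every parabolic is conjugate to a power of a generator --- is false for $k=2$: there the image is (the projection of) $\Gamma(2)$, the quotient is the thrice-punctured sphere with \emph{three} cusps, and $AB$ has trace $2-k^2=-2$, a parabolic fixing $-1$ that is not conjugate in the group to any power of $A$ or $B$. Since the theorem is stated for all $g,n\ge 0$, this case is not vacuous: filling pairs with $i(\alpha,\beta)=2$ exist (e.g.\ on $\Sigma_{0,4}$ or $\Sigma_{1,2}$), and $i(\alpha,\beta)=1$ occurs on $\Sigma_{1,0}$ and $\Sigma_{1,1}$, where the image is all of $\PSL_2(\bbZ)$, which is not free, so ping-pong gives nothing and a separate argument (every parabolic of $\PSL_2(\bbZ)$ is conjugate into the cusp stabilizer $\langle\rho(T_\alpha)\rangle$) is required. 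So your proof of (ii) --- and hence of the ``exactly'' half of the dictionary --- is incomplete for $k\le 2$; the paper sidesteps this because it only ever applies the theorem with $i(\alpha,\beta)\ge 3$ (cf.\ Remark \ref{no n equals 2}), but a proof of the statement as written must address these cases or restrict them. Two smaller points: even for $k\ge 3$, freeness from ping-pong is not enough --- identifying the maximal parabolic subgroups as the conjugates of $\langle A\rangle$ and $\langle B\rangle$ needs discreteness together with a fundamental-domain (geometric finiteness) argument, which you should supply; and (ii) should in any case be read as ``conjugates of powers of $T_\alpha$ or $T_\beta$,'' which is the form your argument actually targets.
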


Using Thurston's representation, we minimize dilatation over all mapping classes in $\langle \rho(T_\alpha),\rho(T_\beta)\rangle \subseteq \PSL_2(\bbZ)$ to find the minimal dilatation mapping class in $\Gamma_{\alpha,\beta}$. Specifically, the smallest spectral radius matrices in the subgroup of $\SL_2(\bbZ)$ given by \[\Lambda_n \coloneq \left \langle \bM{1 & -n \\ 0 & 1}, \bM{1 & 0 \\ n & 1}\right \rangle,~n\ge 3\] achieve the dilatations given in Corollary \ref{main corollary}. 

\subsection{Comparison with prior literature.} There are interesting comparisons between our bounds in $\Gamma_{\alpha,\beta}$ and universal bounds for the entire mapping class group. 

Let $(\log(\lambda))_g$ denotes the smallest achieved topological entropy in $\mcg(\Sigma_{g,n})$ for $g \geq 2$, and define a relation $\sim$ on real valued functions $f, g$, where $f(x) \sim g(x)$ if there exists some constant $C> 0$ such that \[\frac{f(x)}{g(x)} \in [1/C,C], \quad \text{for all $x$}.\] Penner \cite{penner} gives that $(\log(\lambda))_g \sim \frac{1}{g}$. 

Thus in particular, as $g \rightarrow \infty$, there are pseudo-Anosov mapping classes whose dilatations get arbitrarily close to $1$. 

In contrast, in the most general case of our bound ($g \neq 0, 2$ and $n = 0$), the minimal dilatation in $\Gamma_{\alpha,\beta}$ increases monotonically with genus. This contrast with general bounds in $\mcg(\Sigma_{g,n})$ exemplifies that the proportion of pA maps in $\Gamma_{\alpha,\beta}$ to pA maps in $\mcg(\Sigma_{g,n})$ decreases as genus increases; thus, Thurston pA maps become increasingly less representative of the mapping class group for large genus. 
\newline

The authors would like to thank Benson Farb for posing the question that motivated this paper, continually supporting them for the duration of the project, and providing extensive comments on this paper, Faye Jackson for her invaluable explanations and intuition, and Amie Wilkinson for teaching a wonderful course in analysis where the authors first began collaborating. The authors would also like to thank Aaron Calderon for his patience in teaching them about entropy, Peter Huxford for his help on Proposition \ref{form of lambda}, Tarik Aougab for helpful remarks on Section \ref{filling section}, and Noah Caplinger for discussing Theorem~\ref{min dilatation}; this paper would not have been possible without their insight.

\section{Minimal spectral radii in $\Lambda_n$}

Recall we defined $\Lambda_n$ as \[\Lambda_n =\left \langle \bM{1 & -n \\ 0 & 1}, \bM{1 & 0 \\ n & 1}\right \rangle,~n\ge 3.\] The \emph{minimal dilatation} for any hyperbolic map in $\Lambda_n$ (and thus pA maps in $\Gamma_{\alpha,\beta}$) is given by \[\inf\{|\lambda(\alpha)|: |\lambda(\alpha)|>2, \alpha \in \Lambda_n\},\] where $\lambda(\alpha)$ is the spectral radius of $\alpha$. Since $\Lambda_n$ is discrete, this infimum must be realized. 

We begin with case when $n = 1$. In this case, the solution is well-known since $\Lambda_1\simeq SL_2(\mathbb{Z})$ (Theorem 2.5, \cite{primer}). So, the solution reduces to minimizing the roots of the characteristic polynomial equation \[x^2-\tr(\alpha)x + 1.\]

In $\SL_2(\bbZ)$, eigenvalues grow monotonically as a function of trace; the smallest magnitude trace in the infimum is $3$, so we have 
    \[x^2-3x+1=0 \implies \lambda = \frac{3+\sqrt{5}}{2}\] 

Now, finding $\alpha=\begin{bsmallmatrix} w & x \\ y & z\end{bsmallmatrix}$ follows immediately from the conditions $w + z = 3, wz-xy=1$: the solution is given by 
$\alpha = \begin{bsmallmatrix} 2 & 1 \\ 1 & 1 \end{bsmallmatrix}.$ Furthermore, $\alpha$ has two distinct real eigenvalues, so this solution is unique up to conjugacy.

For the general case, let 
\begin{align*}
A=\bM{1 & -n \\ 0 & 1}, \qquad B=\bM{1 & 0 \\ n & 1}
\end{align*}
We will assume $n \neq 2$; later (Remark \ref{no n equals 2}) we show that $\Lambda_2$ is not the representation given by the Thurston construction for any number $g$ or $n$. 

\begin{thm}\label{min dilatation}
    Fix $n > 2$. The minimal spectral radius in $\Lambda_n$ is given by $\frac{1}{2}(n^2+n\sqrt{n^2-4}-2)$ corresponding to the matrix $\begin{bsmallmatrix}
1-n^2 & -n \\ n & 1
\end{bsmallmatrix}$. 
\end{thm}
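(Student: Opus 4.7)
The plan is to reduce minimizing the spectral radius to minimizing $|\tr(\gamma)|$, and then exploit a clean congruence on traces of elements of $\Lambda_n$. Every matrix in $\SL_2(\bbZ)$ has characteristic polynomial $x^2 - \tr(\gamma) x + 1$, so the spectral radius
\[
\lambda(\gamma) = \tfrac12\left(|\tr(\gamma)| + \sqrt{\tr(\gamma)^2 - 4}\right)
\]
is a strictly increasing function of $|\tr(\gamma)|$ on the hyperbolic range $|\tr(\gamma)| > 2$. So I just need to find the minimum of $|\tr(\gamma)|$ over $\gamma \in \Lambda_n$ with $|\tr(\gamma)| > 2$, together with a matrix realizing it.

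The core step is the congruence $\tr(\gamma) \equiv 2 \pmod{n^2}$ for every $\gamma \in \Lambda_n$. I would prove it by writing
\[
A = I - n E_{12}, \qquad B = I + n E_{21},
\]
so that each of the four generators $A^{\pm 1}, B^{\pm 1}$ has the form $I + nX$ with $X$ a trace-zero elementary off-diagonal matrix. Expanding any length-$\ell$ word $\gamma = \prod_{j=1}^{\ell}(I + nX_j)$ by multilinearity yields $\gamma = I + n \sum_j X_j + n^2 R$ where $R$ is an integer matrix (a sum over subsets of size $\geq 2$ of ordered products of the $X_j$'s, with appropriate powers of $n$). Taking the trace and using $\tr(X_j) = 0$ for each $j$ annihilates the $n^1$ contribution, leaving $\tr(\gamma) = 2 + n^2 m$ for some $m \in \bbZ$.

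With the congruence in hand, the minimization collapses to an elementary integer inequality. Hyperbolicity forces $m \neq 0$: if $m \geq 1$ then $|\tr(\gamma)| = 2 + n^2 m \geq n^2 + 2$, whereas if $m \leq -1$ then $|\tr(\gamma)| = |n^2 m + 2| \geq n^2 - 2$, with equality exactly at $m = -1$, i.e.\ $\tr(\gamma) = 2 - n^2$. This bound is realized by $AB = \bsM{1 - n^2 & -n \\ n & 1}$, whose trace is indeed $2 - n^2$ and which is hyperbolic for $n \geq 3$. Substituting $|\tr(\gamma)| = n^2 - 2$ into the spectral radius formula and using $\sqrt{(n^2-2)^2 - 4} = n\sqrt{n^2-4}$ yields exactly $\tfrac12(n^2 + n\sqrt{n^2-4} - 2)$, the claimed value.

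The main obstacle is arriving at the mod-$n^2$ congruence in the first place. A direct combinatorial attack on arbitrary reduced words in the free group $\Lambda_n$ runs into delicate cancellations in the trace, and bounding $|\tr|$ by induction on word length is awkward; the $I + nX$ expansion sidesteps this entirely because the traceless nature of each $X_j$ is exactly what makes the first nontrivial correction to $\tr(\gamma) - 2$ of order $n^2$ rather than $n$. Once one identifies this structural feature, proving the congruence is routine and the rest of the theorem is essentially a one-line integer argument.
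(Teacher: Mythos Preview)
Your argument is correct and follows the same overall architecture as the paper: reduce minimizing spectral radius to minimizing $|\tr(\gamma)|$ over hyperbolic $\gamma$, establish that $\tr(\gamma)\equiv 2\pmod{n^2}$ for all $\gamma\in\Lambda_n$, then do the elementary integer minimization and exhibit $AB$ as a witness.

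Where you diverge is in the proof of the congruence. The paper proves the stronger entrywise statement (its Proposition~\ref{form of lambda}) that every $\gamma\in\Lambda_n$ has the form $\bsM{1+k_1n^2 & k_2n \\ k_3n & 1+k_4n^2}$, by checking that matrices of this shape form a subgroup of $\SL_2(\bbZ)$ via the reduction map $\SL_2(\bbZ)\to\SL_2(\bbZ/n^2\bbZ)$; the trace congruence is then read off from the diagonal. You instead prove only the trace congruence, and do so by the multilinear expansion $\prod_j(I+nX_j)=I+n\sum_jX_j+n^2R$ together with $\tr X_j=0$. Your route is more direct and self-contained for what the theorem actually needs; the paper's route yields more structural information (the full congruence on all four entries), which it does not strictly use in the minimization but which connects $\Lambda_n$ to congruence subgroups and to the reference \cite{chorna}. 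Either approach suffices, and the subsequent minimization and realization by $AB$ are identical.
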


Fix $n > 2$. In $\PSL_2(\bbZ)$, the spectral radius of a matrix $\alpha$ is given by the larger root of the characteristic polynomial \[x^2-\tr(\alpha)x+1=0\] Explicitly, these solutions are \[x = \frac{\tr(\alpha)\pm \sqrt{(\tr(\alpha))^2-4}}{2}\] We wish to minimize spectral radius over hyperbolic matrices, so we assume also that $|\tr(\alpha)| > 2$. For $A \in \SL_2(\bbZ)$, it is also known that $\lambda(A)$ increases monotonically as a function of the magnitude of the trace; it follows that minimizing spectral radius is equivalent to minimizing trace magnitude. Here we minimize the latter and then compute the corresponding dilatation. 

To begin, we show the following, which was observed initially by Chorna, Geller and Shpilrain (Theorem 4(a), \cite{chorna}): \begin{prop}\label{form of lambda}
    Let $\alpha \in \Lambda_n$, $n > 2$. Then $\alpha$ has the form 
    \[\bM{1+k_1n^2 & k_2n \\ k_3n & 1+k_4n^2} \quad k_i \in \bbZ.\]
\end{prop}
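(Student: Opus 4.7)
The plan is to show that the set
\[
H_n \;=\; \left\{ \bM{1 + k_1 n^2 & k_2 n \\ k_3 n & 1 + k_4 n^2} \in \SL_2(\bbZ) : k_1,k_2,k_3,k_4 \in \bbZ \right\}
\]
is in fact a subgroup of $\SL_2(\bbZ)$, and then to observe that it already contains the two generators of $\Lambda_n$. Once these two facts are in hand, closure under the group operations forces $\Lambda_n = \langle A, B\rangle \subseteq H_n$, which is the content of the proposition.

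The verification that $A, B \in H_n$ is immediate: $A$ corresponds to $(k_1,k_2,k_3,k_4)=(0,-1,0,0)$ and $B$ corresponds to $(0,0,1,0)$. Closure of $H_n$ under inverses is also immediate, since for any $M = \bsM{a & b \\ c & d} \in \SL_2(\bbZ)$ the adjugate formula gives $M^{-1} = \bsM{d & -b \\ -c & a}$, which clearly lies in $H_n$ whenever $M$ does (the roles of the diagonal entries simply swap, and the off-diagonal entries are negated).

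The only real content is closure under multiplication, which I would handle by a direct but brief computation. Taking
\[
M = \bM{1+k_1 n^2 & k_2 n \\ k_3 n & 1+k_4 n^2}, \qquad M' = \bM{1+\el_1 n^2 & \el_2 n \\ \el_3 n & 1+\el_4 n^2},
\]
each entry of $MM'$ is a sum of two products. The off-diagonal entries of $MM'$ are sums of the form $(1 + k_i n^2)(\el_j n) + (k_j n)(1 + \el_i n^2)$, which is manifestly $n$ times an integer. The diagonal entries receive an off-diagonal--times--off-diagonal contribution $(k_2 n)(\el_3 n) = n^2 k_2 \el_3$, while the diagonal--times--diagonal terms contribute $(1+k_1 n^2)(1+\el_1 n^2) = 1 + n^2(k_1 + \el_1 + k_1 \el_1 n^2)$, so the total diagonal entry has the form $1 + n^2 \cdot (\text{integer})$, as required.

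The main obstacle, insofar as there is one, is just being careful with the bookkeeping in this multiplication; the key conceptual point is that off-diagonal entries are divisible by $n$, so their products contribute to the diagonal with an extra factor of $n$, upgrading divisibility from $n$ to $n^2$ exactly where needed. No use of the determinant condition is required beyond the implicit assertion that the product and inverse still lie in $\SL_2(\bbZ)$.
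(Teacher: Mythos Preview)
Your proof is correct and follows essentially the same strategy as the paper: show that the set of matrices of the prescribed form is a subgroup of $\SL_2(\bbZ)$ containing the generators $A,B$, whence $\Lambda_n$ is contained in it. The only cosmetic difference is that the paper passes to $\SL_2(\bbZ/n^2\bbZ)$ via the reduction homomorphism and verifies the one-step subgroup criterion $NM^{-1}$ there, whereas you verify closure under products and inverses directly in $\SL_2(\bbZ)$; the underlying arithmetic is the same.
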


\begin{proof}
    For simplicity, we say that a matrix $\gamma$ is \emph{congruent}, denoted \[\gamma \cong \bM{1 \mod n^2 & 0 \mod n \\ 0 \mod n & 1 \mod n^2},\] if $\gamma$ takes on the form 
    \begin{equation}\label{form of gamma}
        \gamma=\bM{1 + k_1n^2 & k_2n \\ k_3n & 1+k_4n^2}, \quad k_i \in \bbZ
    \end{equation} Define $S \subseteq \SL_2(\bbZ)$ as \[S\coloneqq \left\{\gamma \in \SL_2(\bbZ): \gamma \cong \bM{1 \mod n^2 & 0 \mod n \\ 0 \mod n & 1 \mod n^2}\right\}.\] We claim that $S$ is a subgroup of $\SL_2(\bbZ)$. Then, since $A, B \in \SL_2(\bbZ)$, it would follow that every $\gamma \in \Lambda_n$ would take on the form given by \ref{form of gamma}.

    To prove the claim, consider the natural homomorphism $\phi: \SL_2(\bbZ) \rightarrow \SL_2(\bbZ/n^2\bbZ)$ given by reduction modulo $n^2$. Then $S = \phi^{-1}(S')$, where \[S'\coloneqq \left\{\bM{1 & k_1n \\ k_2n & 1}: k_1, k_2 \in \SL_2(\bbZ/n^2\bbZ)\right\}.\] We show that $S'$ is a subgroup of $\SL_2(\bbZ/n^2\bbZ)$. Define $N, M \in \SL_2(\bbZ/n^2\bbZ)$ as \[N = \bM{1 & k_1n \\ k_2n & 1}, M = \bM{1 & k_3n \\ k_4n & 1}.\] Then we have \begin{align*}
        NM^{-1} &= \bM{1 & k_1n \\ k_2n & 1}\bM{1 & -k_3n \\ -k_4n & 1} \\ 
        & = \bM{1 -k_1k_4n^2 & n(k_1-k_3) \\ n(k_2-k_4) & 1-k_2k_3n^2} \\ 
        & \equiv \bM{1 & n(k_1-k_3) \\ n(k_2-k_4) & 1} \in S'. \\ 
    \end{align*} It follows that $S'$ is a subgroup of $\SL_2(\bbZ/n^2\bbZ)$. Then $S = \phi^{-1}(S')$, so $S$ is a subgroup of $\SL_2(\bbZ)$, giving the desired result. 
\end{proof}

\noindent \textit{Proof of Theorem \ref{min dilatation}} By Proposition \ref{form of lambda} it suffices to minimize trace over all matrices of the form \begin{equation}\label{general alpha}
    \alpha = \bM{k_1n^2+1 & k_2n \\ k_3n & k_4n^2+1} \quad \text{ such that } k_i \in \bbZ, (k_1n^2+1)(k_4n^2+1)-k_2k_3n^2=1
\end{equation} Note that we impose the second constraint equation because $\alpha \in \SL_2(\bbZ)$, so we have the determinant 
\[(k_1n^2+1)(k_4n^2+1)-k_2k_3n^2=1.\] 
Rearranging the determinant equation gives $k_2k_3=k_1k_4n^2+(k_1+k_4) \in \bbZ$. Thus, given any fixed $k_1, k_4 \in \bbZ$, there always exists $k_2, k_3$ such that the matrix $\bsM{1+k_1n^2 & k_2n \\ k_3n & 1+k_4n^2}$ is in $\SL_2(\bbZ)$. 

For any $\alpha$ given by \ref{general alpha}, $|\tr(\alpha)|$ is given by \[|2+n^2(k_1+k_4)|\] which is smallest when $k_1+k_4 = 0$. In this case $\tr(\alpha)=2$. Then $\alpha$ is not hyperbolic, so we disregard it. When $k_1+k_4=-1$, then $|\tr(\alpha)|=2-n^2$. For $k_1+k_4=1$, then $|\tr(\alpha)| = 2+n^2 > n^2-2$. Finally, for $|k_1+k_4|>1$, we have 
\[|2+n^2(k_1+k_4)| \in \{(k_1+k_4)n^2-2, (k_1+k_4)n^2+2\},\] 
which in either case is greater in magnitude than $n^2-2$. 

It is left to show that a matrix in $\Lambda_n$ achieves the minimum trace of $n^2-2$. Choosing $k_1 = -1, k_4 = 0$ gives the matrix $\bsM{1-n^2 & k_2n \\ k_3n & 1}$, which implies $k_2=-n$ and $k_3=n$. But this matrix is equal to $AB$, given by $\bsM{1 -n^2 & -n \\ n & 1}$. Thus both $AB$ and $BA$ (which are conjugate) in $\Lambda_n$ achieve the minimum dilatation of $\frac{1}{2}(n^2+n\sqrt{n^2-4}-2)$. 
\qed \newline 

To prove Theorem \ref{main theorem}, note that for two filling curves $\alpha, \beta$ on $\Sigma_{g,n}$ where $i(\alpha,\beta)=n$, we have $\Lambda_n = \langle \rho(T_\alpha), \rho(T_\beta)\rangle$ achieves its smallest dilatation map with $\rho(T_\alpha) \cdot \rho(T_\beta)$, since $\rho(T_\alpha) \cdot \rho(T_\beta)=AB \in \Lambda_n$. This map corresponds to $T_\alpha \cdot T_\beta$ in the associated mapping class group. 

\section{Construction of Filling Curves}\label{filling section}
We exposit work given by Aougab-Huang-Taylor \cite{aougab-huang}, \cite{aougab-taylor} and Jeffreys \cite{jeffreys}. For a fixed surface $\Sigma_{g,n}$, our goal is to obtain a lower bound for the intersection number of a pair of filling curves and subsequently construct examples achieving these minima. We use lower bounds given by the filling permutations of Aougab-Huang \cite{aougab-huang} and Aougab-Taylor \cite{aougab-taylor} and the generalized filling permutations of Jeffreys \cite{jeffreys}, which gives us an algebraic way to describe ``gluing patterns" of polygons. 

The idea is to construct polygons whose sides are identified in such a way that, once glued, they form the surface $\Sigma_{g,n}$ with the glued sides becoming the filling curves $\alpha,\beta$. Each polygon will correspond to a disk in the complement of $\alpha\cup \beta$ on $\Sigma_{g,n}$, so we can retroactively puncture the polygons to form $\Sigma_{g,n}$. Since we will ``place" the punctures, our convention will be to treat them as marked points and thus exclude them from the Euler characteristic.

We begin with a general lower bound for the intersection number on any surface $\Sigma_{g,n}$ from Aougab-Huang (\cite{aougab-huang}, Lemma 2.1).
\begin{lem}\label{lower bound}
    Fix $g \geq 1, n \geq 0$. If $\alpha, \beta$ fill $\Sigma_{g,n}$, then $i(\alpha,\beta) \geq 2g-1$, where $i$ denotes geometric intersection number. 
\end{lem}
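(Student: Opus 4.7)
My plan is an Euler characteristic count on the cell decomposition that $\alpha \cup \beta$ induces on $\Sigma_{g,n}$.

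First I would record the cell counts. Let $i \ceq i(\alpha,\beta)$. The transverse intersection points form the $0$-cells, so $V = i$. Each of $\alpha,\beta$ is a simple closed curve cut by those $i$ points into $i$ open arcs, so $E = 2i$. By the filling hypothesis every complementary component is either an open disk or a once-punctured open disk; write $d$ for the number of disks and $p$ for the number of once-punctured disks, noting that each puncture lies in its own once-punctured component so $p = n$.

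Next, additivity of the (compactly supported) Euler characteristic applied to the open cells gives
\[
\chi(\Sigma_{g,n}) = V - E + d\cdot \chi(\text{disk}) + p\cdot \chi(\text{once-punctured disk}) = i - 2i + d + 0 = d - i.
\]
Substituting the standard value $\chi(\Sigma_{g,n}) = 2 - 2g - n$ (the once-punctured disk being an open annulus contributes $\chi = 0$) yields
\[
d = 2 - 2g - n + i.
\]

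Finally I would finish by a short case split. When $n = 0$ every complementary component is a genuine disk; since $\alpha \cup \beta$ is a nonempty $1$-complex in the closed surface $\Sigma_g$, at least one such region must exist, so $d \geq 1$ and the formula rearranges to $i \geq 2g - 1$. When $n \geq 1$, even the trivial non-negativity $d \geq 0$ already yields the stronger bound $i \geq 2g + n - 2 \geq 2g - 1$. Either way the lemma follows.

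The only real obstacle is careful bookkeeping: one must pick a convention for the Euler characteristic (I take $\chi(\Sigma_{g,n}) = 2 - 2g - n$ with once-punctured disks contributing $0$; the paper's marked-point convention leads to the same formula for $d$) and must observe separately in the closed case that $d \geq 1$ rather than merely $d \geq 0$. Once those points are straight, the computation is mechanical.
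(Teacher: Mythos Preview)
Your proposal is correct and follows essentially the same Euler-characteristic count as the paper: view $\alpha\cup\beta$ as a $4$-valent graph with $V=i$, $E=2i$, and complementary faces, then read off $i\ge 2g-1$ from $\chi$. Your separate bookkeeping for punctured versus unpunctured disks and the case split on $n$ are a mild refinement (yielding the extra inequality $i\ge 2g+n-2$ when $n\ge 1$), but the underlying argument is identical to the paper's.
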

\begin{proof}
    We model $\alpha,\beta$ as a $4$-valent graph $G$ (where vertices $v$ are intersection points) since the complement $\Sigma_g \setminus (\alpha,\beta)$ is a union of topological discs $D$. The Euler characteristic of the graph must match that of $\Sigma_{g,n}$. We know \[\sum_{v \in G} \deg_v(G) = 2|E| = 4|V| = 2i(\alpha,\beta)\] Then we obtain \[\chi(\Sigma_g) = 2-2g = |D|-2i(\alpha,\beta)+i(\alpha,\beta)\] and since $|D| \geq 1$, we have the result. 
\end{proof}

This bound is only realized in the case when $n=0$. For punctured surfaces, however, we can come very close. To construct an explicit example where equality is realized, we now introduce the notion of \emph{filling permutations} from \cite{aougab-huang} and \cite{jeffreys}.

Fix a surface $\Sigma_{g,n}$ and a filling pair $\alpha,\beta$. We will label the subarcs of the curves (segments connecting two intersection points) in the following manner, beginning with the curve $\alpha$. Fix an orientation for $\alpha$ and choose a starting intersection point $x_0\in\alpha\cap\beta$. Travel in the direction of $\alpha$ until we reach an intersection point $x_1\neq x_0$ and label the subarc of $\alpha$ joining $x_0$ to $x_1$ as $\alpha_1$. Continue this process until we arrive back at $x_0$--this will occur since the curve $\alpha$ is closed--labeling the subarcs $\{\alpha_1,\ldots,\alpha_m\}$; note that $m=i(\alpha,\beta)$. Repeat this process with $\beta$ to obtain a labeling $\{\beta_1,\ldots,\beta_m\}$.

Now, cutting the surface along $\alpha\cup\beta$, we obtain $n+2-2g$ polygons whose sides correspond to subarcs of $\alpha$ and $\beta$ and whose vertices are intersection points in $\alpha\cap\beta$. Orient these polygons clockwise. Our goal is to describe the polygons algebraically in terms of permutations acting on their edges. First note that since we cut along $\alpha$ and $\beta$ to obtain these polygons, every subarc $\alpha_k$ of $\alpha$ will have an inverse $\alpha_k^{-1}$ with the opposite orientation; similarly for $\beta$. Define
\[
    A=\{\alpha_1,\beta_1,\ldots,\alpha_m,\beta_m,\alpha^{-1},\beta^{-1},\ldots,\alpha_m^{-1},\beta_m^{-1}\}
\]
and identity $A$ with the set $\{1,\ldots,4m\}$. Label the sides of the polygons with the corresponding elements of $A$.

Now, we define the \emph{filling permutation} of a polygon as $\sigma(j)=k$ if, and only if, traveling clockwise around the polygon the edge labeled by the $j$th element of $A$ is followed by the edge labeled by the $k$th element of $A$. Each filling permutation will be a cycle in $S_{4m}$, the symmetric group on $4m$ elements, so since there are $n+2-2g$ polygons we have $n+2-2g$ corresponding cycles $\sigma\in S_{4m}$. 
\begin{figure}[H]
    \centering
    \includegraphics[width=0.75\linewidth]{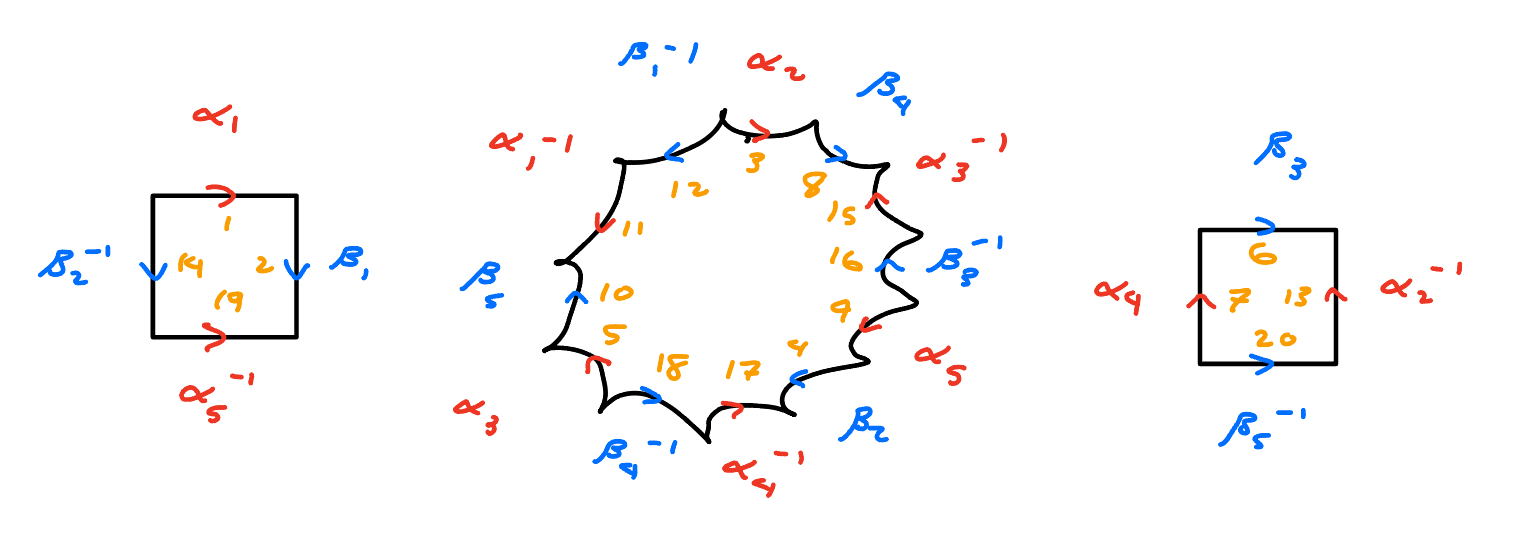}
\caption{The polygons corresponding to a filling pair on $\Sigma_{2,3}$. The associated filling permutations are, from left to right, $(1,2,19,14),$ $(3,8,15,16,9,17,18,5,10,11,12)$, and $(6,13,20,7)$}
\end{figure}

There are two more geometrically significant permutations we are interested in. Take $Q=Q_{\alpha,\beta} \in S_{4m}$ as $Q = (1, 2, \dots, 4m)^{2m}$. We note that $Q$ acts on the edges by reversing their orientation, i.e., it sends $j$ to $k$ if and only if the $j$th and $k$th elements of $A$ are inverses of each other. Finally, define $\tau=\tau_{\alpha,\beta} \in S_{4m}$ as
\[
    \tau=(1,3,5,\ldots,2m-1)(2,4,6,\ldots,2m)(4m-1,4m-3,\ldots,2m+1)(4m,4m-2,\ldots,2m+2).
\]
The first cycle represents sending $\alpha_i$ to $\alpha_{i+1}$, the second $\beta_i$ to $\beta_{i+1}$, the third $\alpha^{-1}_k$ to $\alpha^{-1}_{k+1}$ and the fourth $\beta^{-1}_{k}$ to $\beta^{-1}_{k+1}$. In other words, $\tau$ moves each arc in $\alpha$ to the next arc of $\alpha$ with the same orientation, and similarly for $\beta$. 

We will say that a permutation is \emph{parity-respecting} if it sends even numbers to even numbers and odd numbers to odd numbers and \emph{parity-reversing} if it sends even numbers to odd numbers and odd numbers to even numbers.

The following lemma from Jeffreys (\cite{jeffreys}, Lemma 2.3) gives the conditions necessary to define a filling permutation on a surface $\Sigma_{g,n}$; we will subsequently construct the filling curves by finding a permutation that satisfies these hypotheses.
\begin{lem}\label{construction}
    Let $\alpha,\beta$ be a filling pair on $\Sigma_{g,n}$ with $i(\alpha,\beta)=m\ge i(\alpha,\beta)$, the minimal intersection number. Then, $\sigma=\sigma_{\alpha,\beta}$ satisfies $\sigma Q \sigma=\tau$. Conversely, a parity-reversing permutation $\sigma\in S_{4m}$ consisting of $m+2-2g$ cycles and no more than $n$ 2-cycles that satisfies the above relation defines a filling pair on $\Sigma_{g,n}$ with intersection number $m$.
\end{lem}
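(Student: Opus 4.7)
The proof splits into the forward direction ($\sigma=\sigma_{\alpha,\beta}$ satisfies $\sigma Q\sigma = \tau$) and the converse (abstract $\sigma$ with the stated properties defines a filling pair). For the forward direction I would work locally at each intersection point $p \in \alpha \cap \beta$: the four emanating half-arcs partition a disk neighborhood of $p$ into four sectors, each a corner of one of the polygons obtained from the cut-open surface. Taking a local model where $\alpha$ is horizontal and $\beta$ vertical and using the ``interior-on-the-right'' rule of the clockwise orientation, a short case analysis on each of the four label types $\alpha_i, \alpha_i^{-1}, \beta_j, \beta_j^{-1}$ shows that $\sigma Q\sigma$ takes $j$ to the next label of the same curve with the same orientation: $\sigma$ steps clockwise to the adjacent edge inside the current polygon, $Q$ crosses to the neighboring polygon, and the second $\sigma$ steps clockwise again, landing on $\tau(j)$.

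For the converse, suppose $\sigma \in S_{4m}$ is parity-reversing, consists of $m+2-2g$ cycles with at most $n$ of them $2$-cycles, and satisfies $\sigma Q\sigma = \tau$. I would construct a $2$-complex $X$ by, for each cycle of $\sigma$, taking a planar polygon with sides labeled in clockwise order by the cycle's elements, then identifying the side labeled $j$ with the side labeled $Q(j)$ via the unique orientation-reversing homeomorphism. This produces $F=m+2-2g$ faces and $E=2m$ edges (the $4m$ labels are paired by the involution $Q$). The vertex count comes from tracing orbits of the ``link map'' on corners---the move sending a corner at a vertex $v$ to the next corner at $v$ reached by crossing a bounding edge---which is a combinatorial word in $\sigma$ and $Q$. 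Using $\sigma Q\sigma = \tau$ together with the specific cycle structure of $\tau$, each such orbit has length exactly $4$, so $V = m$ and $\chi(X) = 2-2g$. The orbit-length-$4$ statement simultaneously says that the link at every vertex is a single circle, so $X$ is a closed orientable surface, necessarily of genus $g$.

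The identified edges then assemble into the filling pair: the four cycles of $\tau$ correspond to the four oriented sequences $\alpha_1 \to \cdots \to \alpha_m$, $\alpha_m^{-1} \to \cdots \to \alpha_1^{-1}$, and the analogous two for $\beta$; the relation $\sigma Q\sigma = \tau$ is precisely the condition that, at each vertex, an incoming $\alpha$-arc is continued (on the neighboring polygon) by the outgoing $\alpha$-arc of the same orientation, so the $\alpha_i$'s close into a single simple closed curve $\alpha$ and the $\beta_j$'s into $\beta$. Their complement is a disjoint union of open polygon faces, so $\alpha \cup \beta$ fills. Each $2$-cycle of $\sigma$ contributes a bigon; placing one marked point inside each of the at most $n$ bigons realizes them as punctured disks (putting $\alpha,\beta$ in minimal position on $\Sigma_{g,n}$), while any remaining punctures are distributed among the higher-gons. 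This yields $i(\alpha,\beta)=m$. The main obstacle I expect is the manifold verification in the converse: carefully decoding $\sigma Q\sigma = \tau$ into the combinatorial fact that every vertex has a $4$-corner disk link requires some bookkeeping, whereas the Euler-characteristic count and the curve-closure step are comparatively formal.
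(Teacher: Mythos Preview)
The paper does not actually prove this lemma: it is quoted verbatim from Jeffreys (\cite{jeffreys}, Lemma~2.3), with the closed case going back to Aougab--Huang, and no argument is supplied in the present paper. So there is no ``paper's own proof'' to compare against.

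That said, your sketch is the standard argument and is essentially what one finds in those references. The forward direction is exactly the local four-corner picture you describe: $\sigma$ steps to the adjacent edge at the terminal vertex, $Q$ flips to the matching edge on the neighboring face, and the second $\sigma$ lands on the next subarc of the same curve with the same orientation; the parity-reversing property of $\sigma$ is what guarantees the intermediate edge is of the other curve type. For the converse, your CW-complex construction with the Euler count $V-E+F = m - 2m + (m+2-2g) = 2-2g$ is correct, and the key point---that $\sigma Q\sigma=\tau$ forces every vertex link to be a single $4$-cycle---is indeed where the content lies. One detail worth making explicit in a full write-up is that the parity-reversing hypothesis ensures the four arcs at each vertex alternate $\alpha,\beta,\alpha,\beta$ cyclically, which is what makes each of $\alpha,\beta$ embedded (no self-intersections) rather than merely immersed; your sketch gestures at this but does not isolate it. The treatment of bigons and punctures is fine.
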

Now we have the necessary ingredients to compute the minimal realized number of intersection points on $\Sigma_{g,n}$; we closely follow the one given by \cite{aougab-taylor}, Lemma 3.1.
\begin{prop} \label{intersection numbers}
    Suppose $g \neq 0,2$ and $n = 0$. If $\alpha,\beta$ are minimally intersecting filling curves on $\Sigma_{g,n}$, then \[i(\alpha,\beta)=2g-1.\] If $n \ge 1$, then  \[i(\alpha, \beta)=2g+n-2.\]
\end{prop}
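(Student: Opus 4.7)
The plan is to prove the claim by matching lower bounds on $i(\alpha, \beta)$ with explicit constructions via filling permutations.

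For the lower bound, when $n = 0$ Lemma \ref{lower bound} gives $i(\alpha, \beta) \geq 2g - 1$ immediately. When $n \geq 1$ I would strengthen the Euler characteristic count used in that lemma. Let $m = i(\alpha, \beta)$ and let $|D|$ denote the number of components of $\Sigma_{g,n} \setminus (\alpha \cup \beta)$; the same vertex-edge-face balance yields $|D| = m + 2 - 2g$. The filling hypothesis forces each such component to be a topological disk or a once-punctured disk, so each contains at most one of the $n$ punctures. Consequently $|D| \geq n$, and substituting gives $m \geq 2g + n - 2$.

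To realize these lower bounds I would invoke Lemma \ref{construction} and produce, for each $(g, n)$, a parity-reversing permutation $\sigma \in S_{4m}$ with $m + 2 - 2g$ cycles and at most $n$ $2$-cycles, satisfying $\sigma Q \sigma = \tau$. When $n = 0$, the constraint $m = 2g - 1$ forces $\sigma$ to be a single $4(2g-1)$-cycle. When $n \geq 1$, the constraint $m = 2g + n - 2$ forces $\sigma$ to have exactly $n$ cycles, some of which may be $2$-cycles corresponding to once-punctured disks. Explicit $\sigma$ can be built by adapting the combinatorial recipes in \cite{aougab-taylor} (Lemma 3.1) and \cite{jeffreys}: split into cases by the parity of $g$ (and $n$), and write $\sigma$ as an interleaving of cycles drawn from the four index blocks $\{\alpha_i\}, \{\beta_i\}, \{\alpha_i^{-1}\}, \{\beta_i^{-1}\}$.

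The hardest step will be verifying the algebraic relation $\sigma Q \sigma = \tau$ once a candidate $\sigma$ has been written down; this reduces to a direct pointwise check on each cycle that it tracks the appropriate orbit of $\tau$, but the bookkeeping is delicate because $Q$ swaps each arc with its reverse, so the relation interleaves the four index blocks nontrivially. The exclusion of $g = 2$ in the $n = 0$ case reflects a combinatorial obstruction to this check: no single $12$-cycle in $S_{12}$ satisfies the required relation, so the bound $2g - 1 = 3$ fails on $\Sigma_2$ and the minimum intersection number is pushed up to $4$, as recorded in Corollary \ref{main corollary}.
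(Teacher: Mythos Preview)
Your lower-bound argument matches the paper's exactly: both use the Euler-characteristic count to get $|D|=m+2-2g$ complementary regions and then observe that each region carries at most one puncture, forcing $m\ge 2g+n-2$ (the paper phrases this as $i(\alpha,\beta)=2g+n-2+|D'|$ with $|D'|$ the count of \emph{unpunctured} disks, but the content is identical).

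For the realization step you diverge from the paper. You propose to build, for every $(g,n)$, a filling permutation $\sigma\in S_{4m}$ directly and then verify $\sigma Q\sigma=\tau$ by hand. The paper instead treats the filling-permutation lemma only as a black box for the base case $n=0$ and then proceeds \emph{geometrically}: it punctures the single complementary disk to pass from $n=0$ to $n=1$, gives an explicit $(p,q)$-curve construction on the torus for $g=1$, and introduces the \emph{double bigon method}---pushing a subarc of $\alpha$ across a subarc of $\beta$ to create two punctured bigons---to go from $\Sigma_{g,n}$ to $\Sigma_{g,n+2}$ while increasing the intersection number by exactly $2$. This inductive topological move sidesteps the delicate pointwise verification of $\sigma Q\sigma=\tau$ that you flag as the hardest step, at the cost of needing separate base cases for odd and even $n$ (the even base case, a filling pair on $\Sigma_{g,0}$ with $i=2g$, is still deferred to \cite{aougab-taylor}). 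Your purely combinatorial route is in principle valid and is closer in spirit to Jeffreys' treatment, but it leaves the actual permutations unwritten; the paper's double-bigon induction is more self-contained and makes the existence of the minimizing pairs visually transparent.
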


\begin{proof}
    Using the same argument as in Lemma \ref{lower bound}, we have that $i(\alpha,\beta)=2g+n-2+|D|$ where $|D|$ is the number of topological disks in the complement of $\alpha \cup \beta$ in $\Sigma_{g,n}$. Thus we have the lower bounds and it is left to show that these bounds are realized. The first case is given explicitly by Lemma \ref{construction}; for the second, we induct on $n$. When $n = 1$, then $2g-1=2g+n-2$. Thus the filling curves given in Lemma \ref{construction}, which have a single disk $D$ in their complement, still fill $\Sigma_{g,1}$, obtained by puncturing $D$ once. 
    
    To begin constructing the filling pairs for surfaces with $n \geq 1$ punctures, we give an example for when $g = 1$; there is a formula for the intersection number of curves on the torus (\cite{primer}, Section 1.2.3). Namely, if $\alpha$ is a $(p,q)$ curve and $\beta$ is an $(r,s)$ curve, then \[i(\alpha,\beta)=ps-qr.\] Taking $\alpha$ to be a $(n,1)$-curve and $\beta$ to be a $(0,1)$ gives two curves intersecting exactly $n$ times. The complement of these two curves is $n$ topological disks, and puncturing each gives $2g+n-2=n$ intersections on $\Sigma_{1,n}$. 

    Now, we describe the \textit{double bigon method,} which begins with a filling pair $\alpha,\beta$ on $\Sigma_{g,n}$ and constructs a filling pair on $\Sigma_{g,n+2}$ with intersection number $i(\alpha,\beta)+2$. As before, let $\alpha,\beta$ be a filling pair on $\Sigma_{g,n}$, and orient and label them into subarcs $\alpha_1, \dots, \alpha_{i(\alpha,\beta)}$ and $\beta_1, \dots, \beta_{i(\alpha,\beta)}$. Suppose $i(\alpha_1,\beta_{i(\alpha,\beta)} \neq 0$. Then pushing $\alpha_1$ across $\beta_{i(\alpha,\beta)}$ and back over forms $2$ bigons. Puncturing each of these bigons  gives the same pair of filling curves on $\Sigma_{g,n+2}$ with intersection number $i(\alpha,\beta)+2$. See Figure \ref{double bigon} for reference. 
    \begin{figure}[h]
    \centering
    \includegraphics[width=0.7\linewidth]{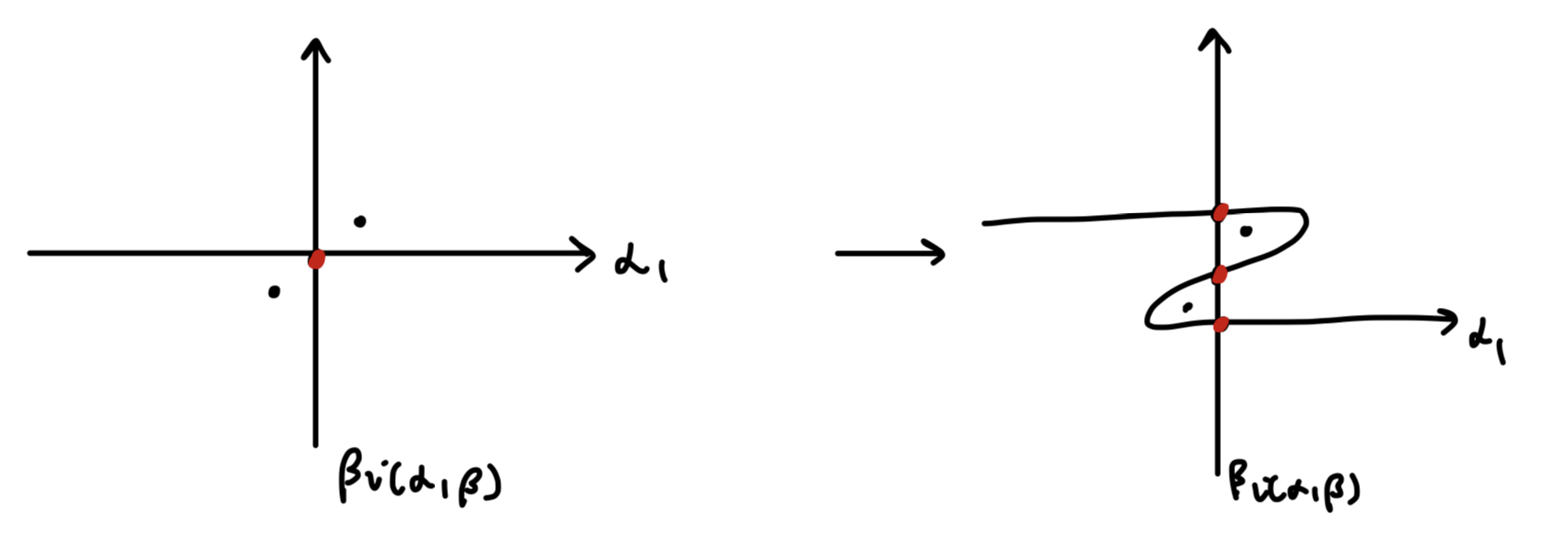}
    \caption{The ``double bigon method.'' Given a pair of filling curves $\alpha,\beta$ on a surface $\Sigma_{g,n}$ with intersection number $i(\alpha,\beta)$, the same pair fills $\Sigma_{g,n+2}$ with intersection number $i(\alpha,\beta)+2$.}
    \label{double bigon}
\end{figure}

    Suppose $n = 2k+1$ is odd and $g > 2$. Take a pair $\alpha,\beta$ which fill $\Sigma_{g,0}$, whose complement is connected, i.e. is a single topological disk, and such that $i(\alpha,\beta)=2g-1$ (we know such an $\alpha,\beta$ exist by Lemma \ref{construction}). Then, puncturing this disk gives that $\alpha, \beta$ fill $\Sigma_{g,1}$. For the remaining $2k$ punctures, perform the double bigon method $k$ times; each time will increase $i(\alpha,\beta)$ by $2$ and will result in $\alpha,\beta$ filling $\Sigma_{g,2k+1}$ with intersection number \[i(\alpha,\beta)+2k=(2g-1)+2k=2g-2+n.\] For $n=2k$ even, the same argument generalizes if there exists a filling pair $\alpha,\beta$ on $\Sigma_{g,0}$ intersecting $2g$ times; we refer the reader to \cite{aougab-taylor}, Lemma 3.1, for the construction of such a pair. 
\end{proof}

A similar application of the double bigon method gives minimal intersection numbers for $\Sigma_{g,n}$ for $g=0, 2$ (see \cite{aougab-taylor}, Lemma 3.1 and \cite{jeffreys} Theorem 3.3). We summarize the results as follows: 
\begin{center}
    \def\arraystretch{1.5}
    \begin{tabular}{|c|c|c|c|}  
        \hline
        Genus & Punctures  &
        $i(\alpha,\beta)$ \\
        \hline\hline
       $g = 0$ & $n \geq 4$ even & $n-2$ \\ \hline 
       $g = 0$ & $n \geq 4$ odd & $n-1$ \\ \hline 
       $g = 2$ & $n \leq 2$ & $4$ \\ \hline 
       $g = 2$ & $n > 2$ & $2g+n-2$ \\ \hline 
    \end{tabular}
\end{center}

\begin{rem}\label{no n less than 4}
    The case $g=0$, $n<4$ is not considered because the filling curves have intersection number zero: if there two or fewer punctures then a single curve fills and if there are exactly three punctures then the filling pair does not intersect. 
\end{rem}
\noindent\textit{proof of Corollary \ref{main corollary}}
The proof follows immediately from plugging in the values from Proposition \ref{intersection numbers} into the matrices in Theorem \ref{min dilatation} and applying Thurston's construction. Fix a surface $\Sigma_{g,n}$, $g\neq 0,2$ and let $\{\alpha,\beta\}$ be a minimally intersecting filling pair, so that $i(\alpha,\beta)=i_{g,n}$.

Letting $A=\bsM{1 & -i(g,n) \\ 0 & 1}$ and $B=\bsM{1 & 0 \\ i(g,n) & 1}$, by Thurston's Construction (Theorem \ref{thurston construction}) the Thurston pA maps in $\Gamma_{\alpha,\beta}\subset\mcg(\Sigma_{g,n})$--the subset of the mapping class generated by Dehn twists $T_\alpha,T_\beta$ about the curves $\alpha$ and $\beta$--correspond to the hyperbolic elements of  $\Lambda_{i(\alpha,\beta)}=\inr{A,B}$. Moreover, the spectral radius of the elements of $\Lambda_{i(\alpha,\beta)}$ correspond to the dilatation of the pA maps. 

By Theorem \ref{min dilatation}, the minimal nonzero spectral radius in $\Lambda_{i(\alpha,\beta)}$ (and thus minimal dilatation in $\Gamma_{\alpha,\beta}$) is given by 
\[
\frac{1}{2}\left(i(\alpha,\beta)^2+i(\alpha,\beta)\sqrt{i(\alpha,\beta)^2-4}-2\right)
\]
achieved by the hyperbolic matrix
\[
    AB=\bM{1 & -i(\alpha,\beta) \\ 0 & 1}\bM{1 & 0 \\ i(\alpha,\beta) & 1}
    =\bM{1-i(\alpha,\beta)^2 & -i(\alpha,\beta) \\ i(\alpha,\beta) & 1}.
\]
By Thurston's Construction, $AB$ represents the pA map $T_\alpha T_\beta$, the product of Dehn twists about $\alpha$ and $\beta$. The specific values for dilatation in Corollary \ref{main corollary} are obtained by substituting the corresponding values of $i_{g,n}$ from Proposition \ref{intersection numbers} for $i(\alpha,\beta)$.
\qed\newline
\begin{rem}\label{no n equals 2}
    We note that the value of $n =2$ is never realized for any $\Sigma_{g,n}$ justifying the exclusion of this value in Proposition \ref{intersection numbers}. 
\end{rem}
\section{Future Directions}
Throughout this paper we exclusively explored the case where $A$ and $B$ are single curves $\alpha$ and $\beta$, respectively, but the problem of finding the minimal dilatation Thurston pA map extends to the general case of \emph{multicurves} $A=\{\alpha_1,\ldots,\alpha_k\},~ B=\{\beta_1,\ldots,\beta_{\ell}\}$ on $\Sigma_{g,n}$ (i.e. disjoint collections of simple closed curves). The \emph{multitwist} about $A$ and $B$ are the products $\prod_{i=1}^n T_{\alpha_i}, \prod_{i=1}^m T_{\beta_i}$, respectively. We recall that the Thurston construction generalizes for multicurve systems which fill $\Sigma_{g,n}$ to obtain a representation $\rho: \Gamma_{A,B} \rightarrow \PSL_2(\bbR)$ given by 
\[T_A \mapsto \bM{1 & -\mu^{1/2} \\ 0 & 1} \quad T_B \mapsto \bM{1 & 0 \\ \mu^{1/2} & 1},\] 
where $\mu$ is the square of the largest singular value of the $k\x\ell$ intersection matrix $N$ whose $(n,m)$ entry is given by
\[
    N_{n,m}=i(\alpha_n,\beta_m),
\]
i.e., $\mu$ is the \emph{Perron-Frobenius eigenvalue} of $N^{\T}N$ (note that we must work with this matrix instead of $N$ since the latter is not necessarily square). We refer the reader to \cite{primer}, Section 14.1.2 for some background on Perron-Frobenius theory. Leininger \cite{Leininger_2004} derived several useful facts regarding minimal pseudo-Anosov dilatation elements in groups generated by multitwists. 

However, as we noted after stating Corollary \ref{main corollary}, twists about two filling curves become less representative of the entire mapping class group with increasing genus. Thus another question to ask is whether $\lambda(\Gamma_{A,B})$ also increases monotonically with genus, particularly when $A, B$ each consist of $g$ multicurves, i.e. are twists about $2g$ filling curves more characteristic of $\Sigma_{g,n}$, particularly for large $g$.

\bibliographystyle{plain}
\bibliography{refs}
\end{document}